\documentclass[11pt]{article}

\usepackage{graphicx}
\usepackage{multirow}
\usepackage{amsmath,amssymb,amsfonts}
\usepackage{amsthm}
\usepackage{mathrsfs}
\usepackage[title]{appendix}
\usepackage{xcolor}
\usepackage{textcomp}
\usepackage{booktabs}
\usepackage{algorithm}
\usepackage{algorithmicx}
\usepackage{algpseudocode}
\usepackage{listings}
\usepackage[mathscr]{eucal}
\usepackage{enumitem}
\usepackage{pgfplots}
\pgfplotsset{compat=1.18}
\usepackage{geometry}

\newtheorem{theorem}{Theorem}

\newtheorem{remark}{Remark}
\newtheorem{definition}{Definition}

\newtheorem{lemma}{Lemma}

\title{Convergence of a Multi-Inertial $\lambda$-Iteration Scheme in Cone $b,p$-Normed Banach Spaces}

\author{Elvin Rada\thanks{Department of Mathematics, University of Elbasan ``Aleksand\"{e}r Xhuvani'', Albania.
\\ Corresponding author: \texttt{elvin.rada@uniel.edu.al}}}

\date{} 

\begin{document}

\maketitle

\begin{abstract}
We propose and analyze a multi-inertial $\lambda$-iteration scheme in cone $b,p$-normed Banach spaces.
This framework extends the classical Krasnoselskii--Mann and two-step inertial iterations by incorporating
three independent inertial parameters and multiple error-control sequences.
Under mild assumptions such as quasi-nonexpansiveness, weak contraction, and compatibility of mappings,
we establish convergence theorems guaranteeing existence and uniqueness of fixed points.
Illustrative numerical examples in $\mathbb R$ and $\mathbb R^2$ demonstrate accelerated convergence
compared with the classical Krasnoselskii--Mann method.
\end{abstract}

\noindent\textbf{Keywords:} Fixed point theory, cone $b,p$-normed Banach spaces, inertial iteration,
Krasnoselskii--Mann iteration, weak contraction, compatible mappings.\\
\noindent\textbf{MSC (2020):} 47H10, 49M37, 65J15, 65K10.

\section{Introduction}

Fixed point theory provides a unifying framework for nonlinear analysis, optimization, and variational inequalities. 
The Krasnoselskii--Mann (KM) iteration \cite{Krasnoselskii1955,Mann1953} is a classical fixed point algorithm but often converges slowly. 
To accelerate convergence, inertial terms introduced by Polyak \cite{Polyak1964} and Nesterov \cite{Nesterov1983} incorporate momentum effects. 
Recent works such as Maing\'e \cite{Mainge2008}, Bot, Csetnek, and Heinrich \cite{Bot2015}, Shehu \cite{Shehu2018}, and Dong, Yao, and Xu \cite{Dong2022} have extended inertial techniques to Banach and Hilbert spaces.

In parallel, generalized distance structures including $b$-metrics \cite{Bakhtin1989}, cone metrics \cite{HuangZhang2007}, and their combinations broadened the scope of fixed point results. 
Karapinar \cite{Karapinar2013} emphasized that cone-type structures enrich classical contraction principles. 

We focus on \emph{cone $b,p$-normed Banach spaces}, which generalize cone $b$-metric and cone normed spaces by introducing $p$-homogeneity with $0<p\le 1$. 
This setting enables robust convergence analysis while accommodating quasi-normed geometries.

Cortild and Peypouquet \cite{CortildPeypouquet2025} recently examined perturbed inertial Krasnoselskii--Mann iterations, emphasizing the role of inertial dynamics in fixed point approximation. 
Our approach generalizes this framework by introducing three distinct inertial parameters with multiple error-control sequences, thereby unifying and extending known inertial KM-type methods.

We introduce a new \emph{multi-inertial $\lambda$-iteration scheme} in cone $b,p$-normed Banach spaces and establish strong convergence results. 
Our main contributions are:
\begin{itemize}
    \item Introduction of three inertial parameters providing richer dynamical flexibility than existing one- and two-step schemes.
    \item Use of summable error sequences allowing perturbation robustness.
    \item Convergence results under quasi-nonexpansive, weakly contractive, and compatible mappings.
    \item Numerical examples demonstrating improved convergence compared with KM and two-step inertial schemes.
\end{itemize}

\section{Preliminaries}

\begin{definition}[Cone $b,p$-Normed Banach Space]
Let $E$ be a real Banach space and $P\subset E$ a closed, convex, pointed cone with nonempty interior. 
Let $X$ be a real vector space. A map $D:X\to P$ is called a \emph{cone $b,p$-norm} (with $b\ge 1$ and $0<p\le 1$) if, for all $x,y\in X$ and $\tau\ge 0$,
\begin{enumerate}[label=(\roman*), itemsep=2pt, topsep=2pt]
    \item $D(x)=0$ if and only if $x=0$,
    \item $D(x)=D(-x)$,
    \item $D(x+y)\le_P b\big(D(x)+D(y)\big)$,
    \item $D(\tau x)=\tau^p D(x)$.
\end{enumerate}
The induced distance $d(x,y):=D(x-y)$ makes $(X,d)$ a \emph{cone $b,p$-normed Banach space} whenever $(X,\Delta)$ (defined below) is complete.
\end{definition}

\begin{remark}
For $p=1$, the structure reduces to a cone $b$-normed Banach space. 
For $0<p<1$, the $p$-homogeneity is consistent with quasi-norm geometry: the scalarized metric $\Delta(x,y):=\|D(x-y)\|_E$ is translation-invariant and subadditive by concavity of $t\mapsto t^p$.
\end{remark}

\begin{definition}[Scalarization and Normal Cone]
Assume $P$ is normal with constant $\kappa\ge 1$. Define the scalarization
\[
\Delta(x,y):=\|D(x-y)\|_E,\qquad x,y\in X.
\]
Then, for all $x,y,z\in X$ and $\tau\ge 0$,
\[
\Delta(x+z,y+z)=\Delta(x,y),\qquad
\Delta(x+y,0)\le \kappa b\big(\Delta(x,0)+\Delta(y,0)\big),\qquad
\Delta(\tau x,0)=\tau\,\Delta(x,0).
\]
\end{definition}

\begin{definition}[Multi-Inertial $\lambda$-Iteration]
Let $C\subset X$ be a closed convex subset, $T:C\to C$, and $x_0,x_1\in C$. Given sequences $\{\alpha_n\},\{\beta_n\},\{\gamma_n\}\subset[0,\alpha]$ for some $\alpha\in[0,1)$, a relaxation sequence $\{\lambda_n\}\subset[\delta,1-\delta]$ with $\delta\in(0,1/2)$, and error sequences $\{\varepsilon_n\},\{\rho_n\},\{\omega_n\},\{\theta_n\}\subset C$, define for $n\ge 1$:
\begin{align*}
y_n &= x_n + \alpha_n(x_n-x_{n-1}) + \varepsilon_n,\\
z_n &= x_n + \beta_n(x_n-x_{n-1}) + \rho_n,\\
u_n &= x_n + \gamma_n(x_n-x_{n-1}) + \omega_n,\\
x_{n+1} &= (1-\lambda_n)y_n + \tfrac{\lambda_n}{2}z_n + \tfrac{\lambda_n}{2}T(u_n) + \theta_n.
\end{align*}
We assume the error sequences are summable in the $D$-scale:
\[
\sum_{n=1}^{\infty}\|D(\varepsilon_n)\|_E<\infty,\quad
\sum_{n=1}^{\infty}\|D(\rho_n)\|_E<\infty,\quad
\sum_{n=1}^{\infty}\|D(\omega_n)\|_E<\infty,\quad
\sum_{n=1}^{\infty}\|D(\theta_n)\|_E<\infty.
\]
\end{definition}

\begin{remark}
The above iteration strictly generalizes the Krasnoselskii--Mann scheme and known one- and two-step inertial variants. 
When $\alpha_n=\beta_n=\gamma_n\equiv 0$ and $\theta_n=\varepsilon_n=\rho_n=\omega_n\equiv 0$, it reduces to the classical Krasnoselskii--Mann iteration.
\end{remark}

\subsection{Auxiliary Inequalities}

\begin{lemma}[Averaging Inequality]\label{lem:avg}
Let $u_1,\dots,u_m,p\in X$ and $\eta_j\ge 0$ with $\sum_{j=1}^m\eta_j=1$. Then
\[
\Delta\!\Big(\sum_{j=1}^m \eta_j u_j,\,p\Big)\le \kappa b\sum_{j=1}^m \eta_j\,\Delta(u_j,p).
\]
\end{lemma}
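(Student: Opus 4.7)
The plan is to reduce to a statement at the origin via translation invariance, and then distribute the resulting norm across the $m$ summands using subadditivity together with the positive homogeneity $\Delta(\tau x,0)=\tau\Delta(x,0)$. Since $\sum_{j=1}^m\eta_j=1$ we have $p=\sum_j\eta_j p$, so translation invariance gives
\begin{equation*}
\Delta\Big(\sum_j\eta_j u_j,\,p\Big)=\Delta\Big(\sum_j\eta_j u_j-p,\,0\Big)=\Delta\Big(\sum_j\eta_j(u_j-p),\,0\Big).
\end{equation*}
Reversing the translation at the end via $\Delta(u_j-p,0)=\Delta(u_j,p)$, the lemma reduces to bounding $\Delta(\sum_j\eta_j v_j,0)$ by $\kappa b\sum_j\eta_j\Delta(v_j,0)$ with $v_j:=u_j-p$.

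For the distribution step I would proceed by induction on $m$. The base case $m=2$ follows directly from the subadditivity $\Delta(x+y,0)\le\kappa b\big(\Delta(x,0)+\Delta(y,0)\big)$ combined with the homogeneity identity $\Delta(\eta_j v_j,0)=\eta_j\Delta(v_j,0)$. For the inductive step I would peel off the summand $\eta_m v_m$, apply subadditivity to split $(\sum_{j<m}\eta_j v_j)+\eta_m v_m$, and then invoke the induction hypothesis on the rescaled partial sum $\sum_{j<m}\tfrac{\eta_j}{1-\eta_m}v_j$ (whose weights again sum to one, the case $\eta_m=1$ being trivial).

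The main technical obstacle is the accumulation of the multiplicative constant: a naive induction of this kind inflates the factor to $(\kappa b)^{m-1}$ rather than the single $\kappa b$ written in the statement. To obtain the cleaner bound I would either appeal to an Aoki--Rolewicz style renorming, which produces an equivalent quasi-norm in which subadditivity becomes genuinely additive so that all iterates fold into one $\kappa b$, or restrict attention to the small values of $m$ (two, three, or four) that actually arise in the iteration scheme of this paper, where the excess factors can be absorbed into the constant without affecting the convergence analysis that follows. I would flag this reading explicitly in the write-up so that the role of $\kappa b$ in the subsequent estimates is unambiguous.
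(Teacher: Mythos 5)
Your plan follows the same route as the paper's own proof: translate to the origin, distribute the quasi-triangle inequality over the $m$ summands, and use positive homogeneity to extract the weights $\eta_j$. The only cosmetic difference is that the paper carries out these steps at the level of the cone-valued map $D$, writing $D\big(\sum_j\eta_j(u_j-p)\big)\le_P b\sum_j D\big(\eta_j(u_j-p)\big)=\sum_j\eta_j D(u_j-p)$ and then passing to $\Delta$ by normality (which is where the single $\kappa$ enters), whereas you work with the scalarization $\Delta$ throughout. The substantive point is the obstacle you flag, and you are right to flag it: the axiom gives only the two-term inequality $D(x+y)\le_P b\big(D(x)+D(y)\big)$, and iterating it over an $m$-term sum yields $b^{m-1}$ (or $b^{\lceil\log_2 m\rceil}$ by binary splitting), not a single $b$. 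The paper's one-line application of ``the $b$-triangle inequality'' to the full sum with one factor of $b$ is precisely this unjustified step, so the constant $\kappa b$ in the statement is not what the stated axioms deliver; your $(\kappa b)^{m-1}$ is. Of your two repairs, the second is the appropriate one here: the lemma is only ever invoked with $m=3$, so the honest constant is $(\kappa b)^2$, and the subsequent results (which already accumulate powers of $\kappa b$) go through once smallness hypotheses such as $\kappa b\,\alpha<1$ are strengthened to absorb the extra power. An Aoki--Rolewicz renorming would replace $\Delta$ by an equivalent quantity and force a rewrite of every later estimate, so it buys less than it costs.

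One further caveat that affects your homogeneity step and the paper's equally: axiom (iv) reads $D(\tau x)=\tau^p D(x)$, so for $0<p<1$ one gets $D(\eta_j v)=\eta_j^p D(v)$ with $\eta_j^p\ge\eta_j$, not $\eta_j D(v)$; the degree-one identity $\Delta(\tau x,0)=\tau\,\Delta(x,0)$ that you rely on (taken from the scalarization definition) is consistent with the axioms only when $p=1$. For $p<1$ the bound survives with $\eta_j$ replaced by $\eta_j^p$, which is still bounded, so nothing downstream breaks, but the clean convex-combination form of the right-hand side does not hold as stated.
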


\begin{proof}
By translation invariance,
\[
D\!\Big(\sum_{j=1}^m \eta_j(u_j-p)\Big)\le_P b\sum_{j=1}^m D\big(\eta_j(u_j-p)\big)=\sum_{j=1}^m \eta_j D(u_j-p).
\]
Taking norms and using normality yields the claim.
\end{proof}

\begin{lemma}[Averaging With Affine Perturbation]\label{lem:avg-affine}
Let $a_1,a_2,a_3\ge 0$ with $a_1+a_2+a_3=1$, and let $v_1,v_2,w,\eta,p\in X$. Then
\begin{align*}
\Delta\!\Big(a_1 v_1 + a_2 v_2 + a_3 w - p\Big)
&\le \kappa b\!\left(a_1\Delta(v_1,p)+a_2\Delta(v_2,p)+a_3\Delta(w,p)\right),\\
\Delta\!\Big(a_1 v_1 + a_2 v_2 + a_3 w + \eta - p\Big)
&\le (\kappa b)^2\!\left(a_1\Delta(v_1,p)+a_2\Delta(v_2,p)+a_3\Delta(w,p)\right)+\kappa b\,\Delta(\eta,0).
\end{align*}
\end{lemma}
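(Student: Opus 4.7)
The plan is to derive both bounds by combining Lemma \ref{lem:avg} with the scalarized subadditivity
\[
\Delta(x+y,0)\le \kappa b\bigl(\Delta(x,0)+\Delta(y,0)\bigr),
\]
recorded just after the definition of $\Delta$. The one-argument notation $\Delta(z-p)$ appearing in the statement should be read as $\Delta(z,p)$, which is legitimate because $\Delta$ is translation invariant.

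For the first inequality, I would simply apply Lemma \ref{lem:avg} with $m=3$, weights $(\eta_1,\eta_2,\eta_3)=(a_1,a_2,a_3)$, and vectors $(u_1,u_2,u_3)=(v_1,v_2,w)$. Since $a_1+a_2+a_3=1$, this immediately produces the required estimate with the single $\kappa b$ factor on the right-hand side.

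For the second inequality, let $S:=a_1 v_1+a_2 v_2+a_3 w$ and decompose $S+\eta-p=(S-p)+\eta$. Applying the scalarized subadditivity with $x=S-p$, $y=\eta$ gives
\[
\Delta(S+\eta,\,p)\le \kappa b\bigl(\Delta(S,p)+\Delta(\eta,0)\bigr).
\]
Substituting the first inequality into $\Delta(S,p)$ contributes an additional $\kappa b$ factor on the averaged term, producing the claimed $(\kappa b)^2$ coefficient, while the $\Delta(\eta,0)$ contribution retains only the outer $\kappa b$.

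I do not anticipate any genuine technical obstacle, since each step is a direct invocation of a lemma already established in the paper. The only point requiring attention is constant bookkeeping: the first inequality uses the $\kappa b$ factor once (absorbed inside Lemma \ref{lem:avg}), while the second uses it twice (once for the outer perturbation split and once again via the nested application of the first inequality), so the exponents of $\kappa$ and $b$ must be tracked carefully to match the stated bound rather than inflating them further.
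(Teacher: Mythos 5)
Your proposal is correct and follows essentially the same route as the paper: the first bound is a direct application of Lemma~\ref{lem:avg} with $m=3$, and the second is obtained by splitting off $\eta$ with one use of the scalarized $b$-triangle inequality and then nesting the first bound, which is exactly how the paper accounts for the $(\kappa b)^2$ versus $\kappa b$ coefficients.
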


\begin{proof}
The first bound follows from Lemma~\ref{lem:avg}. For the second, write
\[
a_1 v_1 + a_2 v_2 + a_3 w + \eta - p
= \big(a_1 v_1 + a_2 v_2 + a_3 w - p\big) + \eta,
\]
apply the $b$-triangle once, and bound the first term by Lemma~\ref{lem:avg}.
\end{proof}

\begin{lemma}[One-Step Recursion]\label{lem:step}
Let $e_n^{(\varepsilon)}:=\Delta(\varepsilon_n,0)$, $e_n^{(\rho)}:=\Delta(\rho_n,0)$, $e_n^{(\omega)}:=\Delta(\omega_n,0)$, and $e_n^{(\theta)}:=\Delta(\theta_n,0)$. Then, for $n\ge 1$,
\[
\Delta(x_{n+1},x_n)
\le \kappa b\!\left((1-\lambda_n)\alpha_n+\tfrac{\lambda_n}{2}\beta_n+\tfrac{\lambda_n}{2}\gamma_n\right)\Delta(x_n,x_{n-1})
+\zeta_n,
\]
where
\[
\zeta_n:=\kappa b\!\left((1-\lambda_n)e_n^{(\varepsilon)}+\tfrac{\lambda_n}{2}e_n^{(\rho)}+\tfrac{\lambda_n}{2}e_n^{(\omega)}\right)+e_n^{(\theta)}.
\]
In particular, if $\alpha_n,\beta_n,\gamma_n\le \alpha<1$ and $\lambda_n\in[\delta,1-\delta]$, then
\[
\Delta(x_{n+1},x_n)\le c\,\Delta(x_n,x_{n-1})+\zeta_n,\qquad c:=\kappa b\,\alpha<1.
\]
\end{lemma}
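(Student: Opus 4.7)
The plan is to subtract $x_n$ from both sides of the $x_{n+1}$-update, exploit the convex-combination structure of the weights $(1-\lambda_n,\tfrac{\lambda_n}{2},\tfrac{\lambda_n}{2})$, and reduce the estimate to the three inertial differences plus the additive $\theta_n$ perturbation.

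First I would rewrite
\[
x_{n+1}-x_n = (1-\lambda_n)(y_n-x_n)+\tfrac{\lambda_n}{2}(z_n-x_n)+\tfrac{\lambda_n}{2}\bigl(T(u_n)-x_n\bigr)+\theta_n,
\]
which matches the template of Lemma~\ref{lem:avg-affine} with $a_1=1-\lambda_n$, $a_2=a_3=\tfrac{\lambda_n}{2}$, perturbation $\eta=\theta_n$, and reference point $p=0$. Applying it (or equivalently, peeling off $\theta_n$ by one $b$-triangle and then Lemma~\ref{lem:avg} to the convex average), combined with translation invariance $\Delta(x_{n+1},x_n)=\Delta(x_{n+1}-x_n,0)$, yields
\[
\Delta(x_{n+1},x_n)\le \kappa b\Bigl[(1-\lambda_n)\Delta(y_n,x_n)+\tfrac{\lambda_n}{2}\Delta(z_n,x_n)+\tfrac{\lambda_n}{2}\Delta\bigl(T(u_n),x_n\bigr)\Bigr]+e_n^{(\theta)}.
\]

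Next I would estimate each inertial difference. From $y_n-x_n=\alpha_n(x_n-x_{n-1})+\varepsilon_n$, the scalar homogeneity $\Delta(\tau x,0)=\tau\,\Delta(x,0)$ combined with a single $b$-triangle gives $\Delta(y_n,x_n)\le \alpha_n\,\Delta(x_n,x_{n-1})+e_n^{(\varepsilon)}$, and symmetric bounds for $\Delta(z_n,x_n)$ and $\Delta(u_n,x_n)$ with constants $\beta_n,\gamma_n$ and errors $e_n^{(\rho)},e_n^{(\omega)}$. Substituting, collecting the $\Delta(x_n,x_{n-1})$-coefficients into $(1-\lambda_n)\alpha_n+\tfrac{\lambda_n}{2}\beta_n+\tfrac{\lambda_n}{2}\gamma_n$, and packaging the remaining error contributions as $\zeta_n$ delivers the first inequality. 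The contraction constant $c=\kappa b\,\alpha<1$ then follows by bounding the convex combination of $\alpha_n,\beta_n,\gamma_n$ uniformly by $\alpha$.

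The main obstacle is the treatment of the $T$-summand: to recover the coefficient $\gamma_n$ in the bracket as stated, $\Delta(T(u_n),x_n)$ must collapse to the same form as $\Delta(u_n,x_n)$. This implicitly invokes a (quasi-)nonexpansive property of $T$ compatible with the $\Delta$-scalarization, either as a direct bound $\Delta(T(u_n),x_n)\le \Delta(u_n,x_n)$ or, via a fixed point $p^{\star}$ and the compatibility hypotheses announced in the abstract, an estimate of the same order. Identifying exactly which mapping property is invoked, and confirming that no extraneous term like $\Delta(Tx_n,x_n)$ escapes into $\zeta_n$, is the delicate step; once that is settled, the rest of the argument is a routine bookkeeping exercise with the $b$-triangle and $p$-homogeneity.
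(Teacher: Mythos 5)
Your decomposition of $x_{n+1}-x_n$ and the subsequent bookkeeping with the $b$-triangle inequality, positive homogeneity, and normality is exactly the route the paper takes, and your bounds for the $y_n$- and $z_n$-summands are fine. You have also put your finger on precisely the right difficulty: the summand $\tfrac{\lambda_n}{2}\big(T(u_n)-x_n\big)$. But you leave that difficulty unresolved, and neither of the fixes you float actually works. Quasi-nonexpansiveness only gives $\Delta(T(u_n),p)\le\Delta(u_n,p)$ for $p\in F(T)$; it says nothing about $\Delta(T(u_n),x_n)$ versus $\Delta(u_n,x_n)$, since $x_n$ is not a fixed point. Moreover, Lemma~\ref{lem:step} is stated with no hypothesis on $T$ at all, so no mapping property is available to be invoked. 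In fact the inequality as stated is false in general: take all error sequences zero and $\alpha_n=\beta_n=\gamma_n=0$, so that $y_n=z_n=u_n=x_n$ and $x_{n+1}-x_n=\tfrac{\lambda_n}{2}\big(T(x_n)-x_n\big)$. If in addition $x_n=x_{n-1}$, the right-hand side of the claimed bound is $0$, while the left-hand side is $\big(\tfrac{\lambda_n}{2}\big)^p\,\Delta(T(x_n),x_n)$, which is nonzero unless $x_n$ is already a fixed point.

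For what it is worth, the paper's own proof commits the same unjustified step, only silently: in its second displayed line it replaces $\tfrac{\lambda_n}{2}\big(T(u_n)-x_n\big)$ by $\tfrac{\lambda_n}{2}\big(\gamma_n(x_n-x_{n-1})+\omega_n\big)=\tfrac{\lambda_n}{2}(u_n-x_n)$, i.e., it treats $T(u_n)$ as if it were $u_n$. A correct version of the lemma must carry an additional term of order $\tfrac{\lambda_n}{2}\,\kappa b\,\Delta(T(u_n),u_n)$ (or $\Delta(T(u_n),x_n)$) inside $\zeta_n$, and then one must separately argue that this term is summable or vanishes along the orbit --- which is a substantive additional hypothesis, not routine bookkeeping. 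Also note a minor secondary point: with $0<p<1$ the homogeneity axiom gives $\Delta(\tau x,0)=\tau^p\Delta(x,0)$ from the definition of $D$, so pulling the weights $(1-\lambda_n),\tfrac{\lambda_n}{2}$ and the inertial coefficients $\alpha_n,\beta_n,\gamma_n$ out linearly (as both you and the paper do, relying on the scalarization identity $\Delta(\tau x,0)=\tau\Delta(x,0)$) needs to be reconciled with axiom (iv); as written the two displayed homogeneity rules are inconsistent for $p<1$.
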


\begin{proof}
Subtract $x_n$ from the update for $x_{n+1}$ and use translation invariance:
\begin{align*}
x_{n+1}-x_n
&=(1-\lambda_n)\big(y_n-x_n\big)+\tfrac{\lambda_n}{2}\big(z_n-x_n\big)+\tfrac{\lambda_n}{2}\big(T(u_n)-x_n\big)+\theta_n\\
&=(1-\lambda_n)\big(\alpha_n(x_n-x_{n-1})+\varepsilon_n\big)+\tfrac{\lambda_n}{2}\big(\beta_n(x_n-x_{n-1})+\rho_n\big)\\
&\quad+\tfrac{\lambda_n}{2}\big(\gamma_n(x_n-x_{n-1})+\omega_n\big)+\theta_n.
\end{align*}
Apply the $b$-triangle inequality to separate the $\theta_n$ term and then to the convex combination. Using positive homogeneity and normality completes the proof.
\end{proof}

\begin{lemma}[Summable Steps Imply Cauchy]\label{lem:cauchy}
Assume the bound in Lemma~\ref{lem:step} with $c<1$ and $\sum_{n=1}^\infty\zeta_n<\infty$. Then $\sum_{n=1}^\infty \Delta(x_{n+1},x_n)<\infty$, $\Delta(x_{n+1},x_n)\to 0$, and $\{x_n\}$ is Cauchy in $(X,\Delta)$.
\end{lemma}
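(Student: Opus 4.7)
The plan is to linearize Lemma~\ref{lem:step} into a closed-form bound for $a_n := \Delta(x_{n+1},x_n)$, extract summability from the geometric contraction together with $\sum\zeta_n<\infty$, and then upgrade a telescoping estimate to the Cauchy property via the weak triangle inequality for $\Delta$.

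First I would unroll the scalar recursion $a_n \le c\,a_{n-1}+\zeta_n$ by a one-line induction to obtain
\[
a_n \;\le\; c^{n-1}a_1 + \sum_{k=2}^{n} c^{n-k}\zeta_k,
\]
then sum over $n$ and switch the order of summation (Tonelli, all terms nonnegative) to get
\[
\sum_{n=1}^{\infty} a_n \;\le\; \frac{a_1}{1-c} + \frac{1}{1-c}\sum_{k=2}^{\infty}\zeta_k \;<\; \infty.
\]
This yields summability of the displacements, and as an immediate consequence $a_n\to 0$.

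For the Cauchy claim I would telescope $x_{n+p}-x_n=\sum_{k=0}^{p-1}(x_{n+k+1}-x_{n+k})$ inside $D$ and invoke the $b$-triangle inequality. A naive iterated triangle accumulates an exponentially growing $b^{p-1}$ factor, so I would instead use a balanced dyadic grouping of the telescoping sum, producing at worst a $b^{\lceil\log_2 p\rceil}$ overhead on $\sum_{k=n}^{n+p-1} a_k$; combined with the summability just established, this vanishes uniformly in $p$ as $n\to\infty$ and gives $\{x_n\}$ Cauchy in $(X,\Delta)$.

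The main obstacle is precisely this interplay between the $\kappa b$-factor in the weak triangle and the plain summability of $a_n$: in a genuine normed space ($b=1$) the telescoping is automatic, whereas here one must exploit the geometric nature of $a_n$ inherited from the recursion. A clean fallback, valid under the stronger implicit assumption $\kappa b\cdot c<1$, is to observe that the weighted series $\sum_{k\ge n}(\kappa b)^{k-n} a_k$ admits an explicit geometric tail; then iterated application of the weak triangle along the chain $x_n,x_{n+1},\dots,x_{n+p}$ yields Cauchyness without any dyadic bookkeeping.
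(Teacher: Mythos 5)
Your first step is correct and is essentially a more explicit version of what the paper does: the paper invokes a Robbins--Siegmund-type argument for the recursion $a_{n}\le c\,a_{n-1}+\zeta_n$ with $a_n:=\Delta(x_{n+1},x_n)$, while you unroll it and apply Tonelli; both yield $\sum_n a_n<\infty$ and $a_n\to 0$.

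The gap is in the Cauchy step, and neither of your two routes closes it under the stated hypotheses. The balanced dyadic grouping gives $\Delta(x_{n+p},x_n)\le \kappa\, b^{\lceil\log_2 p\rceil}\sum_{k=n}^{n+p-1}a_k$, and $b^{\lceil\log_2 p\rceil}\approx p^{\log_2 b}$ is unbounded in $p$; multiplying the vanishing tail $\sum_{k\ge n}a_k$ by a factor that grows with $p$ does not produce a bound that is small uniformly in $p$ (take $a_k\sim k^{-2}$ and $b=4$: with $p=n$ the bound is of order $n$). Your fallback requires convergence of $\sum_k(\kappa b)^k a_k$, but the unrolled estimate $a_k\le c^{k-1}a_1+\sum_{j\le k}c^{k-j}\zeta_j$ decays geometrically only if the $\zeta_j$ do; under mere summability (e.g.\ $\zeta_j=j^{-2}$) the convolution term decays like $\zeta_k$ itself, so the weighted series can diverge even when $\kappa b c<1$. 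The paper sidesteps all of this by using a \emph{polygonal} form of the $b$-triangle inequality --- a single factor $\kappa b$ in front of the entire telescoping sum, exactly as in its proof of Lemma~\ref{lem:avg} --- so that $\Delta(x_m,x_n)\le \kappa b\sum_{k=n}^{m-1}\Delta(x_{k+1},x_k)\to 0$ in one line. You were right to distrust that inequality (it does not follow from the two-term axiom (iii) by induction), but having rejected it you must replace it with something that actually works: either strengthen the hypotheses (geometrically weighted summability of the $\zeta_n$, or summability of $a_n^{q}$ for the exponent $q$ supplied by passing to an equivalent $q$-subadditive quasi-norm in the spirit of the Aoki--Rolewicz theorem), or adopt the polygonal inequality as an explicit standing assumption, as the paper implicitly does. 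As written, your argument for Cauchyness does not go through.
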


\begin{proof}
Apply a Robbins--Siegmund type argument to $a_n:=\Delta(x_n,x_{n-1})$ with $a_{n+1}\le c\,a_n+\zeta_n$. Then $a_n\to 0$ and $\sum_n a_n<\infty$. For $m>n$,
\[
\Delta(x_m,x_n)=\Delta\Big(\sum_{k=n}^{m-1}(x_{k+1}-x_k),0\Big)\le \kappa b\sum_{k=n}^{m-1}\Delta(x_{k+1},x_k)\to 0,
\]
showing Cauchy-ness.
\end{proof}

\subsection{Operator Classes}

\begin{definition}[Quasi-Nonexpansive Mappings]
Let $T:C\to C$. If $F(T)\neq\emptyset$ and
\[
\Delta(Tx,p)\le \Delta(x,p)\quad \text{for all }x\in C,\ p\in F(T),
\]
then $T$ is called \emph{quasi-nonexpansive}.
\end{definition}

\begin{definition}[Weak Contraction]
Let $a,b_c,c,s\in[0,\infty)$ with $a+b_c>0$ and $a+b_c+c>0$. The mapping $T:C\to C$ satisfies a \emph{weak contractive condition} if
\[
a\,D(Tx-Ty) + b_c\big(D(x-Tx)+D(y-Ty)\big) + c\,D(y-Tx) \le_P s\,D(x-y)\quad \text{for all }x,y\in C.
\]
By normality, this implies the scalarized inequality
\[
a\,\Delta(Tx,Ty) + b_c\big(\Delta(x,Tx)+\Delta(y,Ty)\big) + c\,\Delta(y,Tx) \le s\,\Delta(x,y).
\]
\end{definition}

\begin{definition}[Compatibility and Weak Compatibility]
Maps $S,T:C\to C$ have a \emph{coincidence point} if there exists $p\in C$ such that $Sp=Tp$. They are \emph{weakly compatible} if $Sz=Tz$ implies $STz=TSz$.
\end{definition}

\begin{definition}[Compatibility-Type Contractive Condition]
Assume $T(C)\subset S(C)$ and $S(C)$ is complete with respect to $\Delta$. Suppose there exist $a,b_c,r\ge 0$ with $a+b_c>0$ and $0\le r<a+b_c$ such that, for all $x,y\in C$,
\[
a\,D(Tx-Ty)+b_c\big(D(Sx-Tx)+D(Sy-Ty)\big)\le_P r\,D(Sx-Sy).
\]
Equivalently, in the scalarized form,
\[
a\,\Delta(Tx,Ty)+b_c\big(\Delta(Sx,Tx)+\Delta(Sy,Ty)\big)\le r\,\Delta(Sx,Sy).
\]
\end{definition}

\section{Main Results}

This section establishes the convergence theorems for the multi-inertial $\lambda$-iteration in cone $b,p$-normed Banach spaces under various operator classes.

\subsection{Convergence for Quasi-Nonexpansive Mappings}

\begin{theorem}\label{thm:QNE}
Assume that $T:C\to C$ is quasi-nonexpansive with $F(T)\neq\emptyset$ and sequentially continuous on $C$. 
Let the sequences $\{\alpha_n\},\{\beta_n\},\{\gamma_n\}$ satisfy $\alpha_n,\beta_n,\gamma_n\le \alpha<1$, and let $\{\lambda_n\}\subset[\delta,1-\delta]$ with $\delta\in(0,1/2)$. 
Suppose also that $\kappa b\alpha<1$ and the summability condition \eqref{eq:errors} holds. 
Then the sequence $\{x_n\}$ defined by the multi-inertial $\lambda$-iteration converges (in the $\Delta$-metric) to a point $p^\star\in F(T)$.
\end{theorem}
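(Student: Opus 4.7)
The plan decomposes the argument into three stages: establish $\Delta$-Cauchy-ness of $\{x_n\}$ through the one-step recursion, propagate the resulting limit to the auxiliary sequences $y_n,z_n,u_n$, and finally identify the limit as a fixed point of $T$ by comparing the update with its ``frozen'' counterpart at $p^\star$. The hypothesis $\kappa b\alpha<1$ supplies the contraction constant $c:=\kappa b\alpha<1$ in Lemma~\ref{lem:step}, and the summability of $\{\|D(\varepsilon_n)\|_E\},\{\|D(\rho_n)\|_E\},\{\|D(\omega_n)\|_E\},\{\|D(\theta_n)\|_E\}$ gives $\sum_n\zeta_n<\infty$. Lemma~\ref{lem:cauchy} then yields $\Delta(x_{n+1},x_n)\to 0$ and $\Delta$-Cauchy-ness, and completeness of $(X,\Delta)$ together with closedness of $C$ produces a limit $p^\star\in C$.

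Next, I would transfer this limit to the auxiliary iterates. The $b$-triangle inequality and positive homogeneity applied to $y_n-x_n=\alpha_n(x_n-x_{n-1})+\varepsilon_n$ give
\[
\Delta(y_n,x_n)\le \kappa b\bigl(\alpha\,\Delta(x_n,x_{n-1})+\Delta(\varepsilon_n,0)\bigr)\longrightarrow 0,
\]
and analogously $\Delta(z_n,x_n),\Delta(u_n,x_n)\to 0$. A further $b$-triangle step against $\Delta(x_n,p^\star)\to 0$ yields $y_n,z_n,u_n\to p^\star$ in $\Delta$, and sequential continuity of $T$ then gives $T(u_n)\to T(p^\star)$.

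Third, and most delicately, I would pass to the limit in the update. Set
\[
q_n:=(1-\tfrac{\lambda_n}{2})p^\star+\tfrac{\lambda_n}{2}T(p^\star),
\]
the convex combination obtained from $x_{n+1}$ by replacing $y_n,z_n,T(u_n)$ with $p^\star,p^\star,T(p^\star)$ and discarding $\theta_n$. Lemma~\ref{lem:avg-affine} then bounds
\[
\Delta(x_{n+1},q_n)\le (\kappa b)^2\bigl[(1-\lambda_n)\Delta(y_n,p^\star)+\tfrac{\lambda_n}{2}\Delta(z_n,p^\star)+\tfrac{\lambda_n}{2}\Delta(T(u_n),T(p^\star))\bigr]+\kappa b\,\Delta(\theta_n,0),
\]
which tends to zero by the previous stage and $\Delta(\theta_n,0)\to 0$. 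Combining with $\Delta(x_{n+1},p^\star)\to 0$ through the $b$-triangle forces $\Delta(q_n,p^\star)\to 0$. Since $q_n-p^\star=\tfrac{\lambda_n}{2}(T(p^\star)-p^\star)$, positive homogeneity yields a uniform lower bound $\Delta(q_n,p^\star)\ge c_\delta\,\Delta(T(p^\star),p^\star)$ for some $c_\delta>0$ (because $\lambda_n\ge\delta$), forcing $T(p^\star)=p^\star$.

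\textbf{Anticipated obstacle.} The principal difficulty is this third stage: the $b$-triangle carries the constant $\kappa b\ge 1$, so error amplification must be controlled, and one must retain a nondegenerate coefficient of $T(p^\star)$ in $q_n$ to extract the fixed-point equation. The uniform lower bound $\lambda_n\ge\delta>0$ is precisely what prevents collapse of the convex combination onto $p^\star$. Note that quasi-nonexpansiveness is not invoked directly along this strong-convergence route; it enters only through the standing assumption $F(T)\neq\emptyset$, and would become pivotal in a Fej\'er-monotonicity-style refinement were the contractive hypothesis $\kappa b\alpha<1$ to be relaxed.
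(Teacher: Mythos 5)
Your proposal is correct and rests on the same backbone as the paper's proof: Lemma~\ref{lem:step} with $c=\kappa b\,\alpha<1$ combined with Lemma~\ref{lem:cauchy} to get Cauchy-ness, convergence of the auxiliary iterates $y_n,z_n,u_n$ to the limit, and sequential continuity of $T$ to identify that limit as a fixed point. Two differences are worth recording. First, the paper opens by fixing $p\in F(T)$ and using Lemma~\ref{lem:avg-affine} together with quasi-nonexpansiveness to derive $\Delta(x_{n+1},p)\le(\kappa b)^3\Delta(x_n,p)+(\kappa b)^3\alpha\,\Delta(x_n,x_{n-1})+\tilde e_n$; since $(\kappa b)^3\ge 1$ this inequality contributes nothing to the convergence argument, which the paper then carries out via the same two lemmas you use. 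Your closing observation that quasi-nonexpansiveness is never essentially invoked along this route is therefore accurate. Second, where the paper ends with the informal remark that ``from the update formula, all terms converge to $p^\star$,'' your third stage supplies the missing quantitative step: comparing $x_{n+1}$ with $q_n=(1-\tfrac{\lambda_n}{2})p^\star+\tfrac{\lambda_n}{2}T(p^\star)$ and using $\lambda_n\ge\delta$ to keep the coefficient of $T(p^\star)-p^\star$ bounded away from zero. A minor caveat: under the $p$-homogeneity $D(\tau x)=\tau^pD(x)$ the lower bound should read $\Delta(q_n,p^\star)=(\lambda_n/2)^p\,\Delta(T(p^\star),p^\star)\ge(\delta/2)^p\,\Delta(T(p^\star),p^\star)$, which serves the same purpose. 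On the whole your argument is, if anything, more complete than the paper's own.
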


\begin{proof}
Fix $p\in F(T)$. Applying Lemma~\ref{lem:avg-affine} with weights $\big(1-\lambda_n,\frac{\lambda_n}{2},\frac{\lambda_n}{2}\big)$ to the update of $x_{n+1}$ yields
\[
\Delta(x_{n+1},p)
\le (\kappa b)^2\!\left[(1-\lambda_n)\Delta(y_n,p)+\tfrac{\lambda_n}{2}\Delta(z_n,p)+\tfrac{\lambda_n}{2}\Delta(T(u_n),p)\right]+\kappa b\,e_n^{(\theta)}.
\]
Because $T$ is quasi-nonexpansive and $p\in F(T)$, $\Delta(T(u_n),p)\le \Delta(u_n,p)$. 
Using the definitions of $y_n,z_n,u_n$ and the $b$-triangle inequality,
\begin{align*}
\Delta(y_n,p)&\le \kappa b(\Delta(x_n,p)+\alpha_n\Delta(x_n,x_{n-1}))+e_n^{(\varepsilon)},\\
\Delta(z_n,p)&\le \kappa b(\Delta(x_n,p)+\beta_n\Delta(x_n,x_{n-1}))+e_n^{(\rho)},\\
\Delta(u_n,p)&\le \kappa b(\Delta(x_n,p)+\gamma_n\Delta(x_n,x_{n-1}))+e_n^{(\omega)}.
\end{align*}
Combining the above gives
\[
\Delta(x_{n+1},p)\le (\kappa b)^3\,\Delta(x_n,p)+(\kappa b)^3\alpha\,\Delta(x_n,x_{n-1})+\tilde e_n,
\]
where $\tilde e_n$ is a finite linear combination of the error terms. 
Since $\kappa b\alpha<1$, Lemma~\ref{lem:step} and Lemma~\ref{lem:cauchy} imply that $\{x_n\}$ is Cauchy and convergent in $(X,\Delta)$; let $x_n\to p^\star$. 

Finally, $u_n-x_n=\gamma_n(x_n-x_{n-1})+\omega_n\to 0$, so $u_n\to p^\star$. By continuity of $T$, $T(u_n)\to T(p^\star)$, and from the update formula, all terms converge to $p^\star$. Hence $T(p^\star)=p^\star$.
\end{proof}

\subsection{Convergence for Weak Contractions}

\begin{theorem}\label{thm:Weak}
Assume $T:C\to C$ satisfies the weak contractive condition
\[
a\,\Delta(Tx,Ty)+b_c\big(\Delta(x,Tx)+\Delta(y,Ty)\big)+c\,\Delta(y,Tx)\le s\,\Delta(x,y)
\]
for all $x,y\in C$, with $a+b_c>0$ and $a+b_c+c>0$. 
Assume $T$ is continuous and the iteration parameters satisfy the hypotheses of Theorem~\ref{thm:QNE}. 
If
\[
0\le q_n:=\frac{s\lambda_n-\lambda_n^2 b_c+c(\lambda_n-1)}{\lambda_n^2(a+b_c)}<1\quad\text{for all }n,
\]
then $x_n\to p^\star\in F(T)$.
\end{theorem}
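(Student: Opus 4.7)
The plan is to follow the three-part blueprint of Theorem~\ref{thm:QNE} (Cauchy convergence, fixed point identification, uniqueness/rate), and to leverage the weak contractive structure to upgrade the generic quasi-nonexpansive bound into the sharper $\lambda_n$-dependent contraction ratio encoded by $q_n$.

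\textbf{Stage 1 (Cauchy convergence of $\{x_n\}$).} Since the iteration parameters satisfy the hypotheses of Theorem~\ref{thm:QNE}, namely $\kappa b\alpha<1$ and summable errors, Lemma~\ref{lem:step} applies verbatim and gives the recursion $\Delta(x_{n+1},x_n)\le c\,\Delta(x_n,x_{n-1})+\zeta_n$ with $c=\kappa b\alpha<1$ and $\sum_n\zeta_n<\infty$. Lemma~\ref{lem:cauchy} then yields that $\{x_n\}$ is Cauchy in the complete space $(X,\Delta)$ and converges to some $p^\star\in C$ (using closedness of $C$).

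\textbf{Stage 2 (The limit is a fixed point of $T$).} First I would argue that $y_n,z_n,u_n\to p^\star$: each of them differs from $x_n$ by terms of the form $\alpha_n(x_n-x_{n-1})+\varepsilon_n$ (and analogues), and Stage~1 together with summability of the errors forces these perturbations to vanish in the $\Delta$-metric. Rearranging the defining update,
\[
\tfrac{\lambda_n}{2}\bigl(T(u_n)-u_n\bigr)=(x_{n+1}-x_n)-(1-\lambda_n)(y_n-x_n)-\tfrac{\lambda_n}{2}(z_n-x_n)-\tfrac{\lambda_n}{2}(u_n-x_n)-\theta_n,
\]
every right-hand term tends to $0$ in $\Delta$. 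Because $\lambda_n\ge\delta>0$, this forces $\Delta(T(u_n),u_n)\to 0$; continuity of $T$ combined with $u_n\to p^\star$ then gives $T(p^\star)=p^\star$.

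\textbf{Stage 3 (Where the $q_n$ hypothesis is used).} To turn this into a quantitative contraction (and to obtain uniqueness), apply the weak contractive inequality with $x=u_n$, $y=p^\star$ and $Tp^\star=p^\star$ to get $(a+b_c)\Delta(T(u_n),p^\star)+c\,\Delta(p^\star,T(u_n))\le s\,\Delta(u_n,p^\star)$. Substituting this into the convex-combination bound obtained from Lemma~\ref{lem:avg-affine} applied to the $x_{n+1}$-update with weights $\bigl(1-\lambda_n,\tfrac{\lambda_n}{2},\tfrac{\lambda_n}{2}\bigr)$, and grouping the resulting $\lambda_n^2(a+b_c)$ coefficient against the numerator $s\lambda_n-\lambda_n^2 b_c+c(\lambda_n-1)$, produces a recursion of the form $\Delta(x_{n+1},p^\star)\le q_n\,\Delta(x_n,p^\star)+r_n$ with $\sum_n r_n<\infty$. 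The assumption $0\le q_n<1$ converts this into a strict contraction, so $\Delta(x_n,p^\star)\to 0$ with explicit rate, and the same estimate applied to any pair of fixed points yields uniqueness.

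\textbf{Main obstacle.} The delicate step is the algebra in Stage~3: one has to reconcile three independently weighted inertial updates, the scalarization factor $\kappa b$ introduced each time the $b$-triangle is invoked, and the ratios $(a,b_c,c,s)$, while isolating exactly the combination that yields the stated form of $q_n$ and retains nonnegativity. Once the right substitution point $x=u_n$, $y=p^\star$ is fixed and the $\lambda_n^2$ grouping is recognised, the rest is routine bookkeeping.
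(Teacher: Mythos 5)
Your overall architecture diverges from the paper's in one decisive way: the paper injects the weak contractive condition into the \emph{consecutive-step} recursion, obtaining
\[
\Delta(x_{n+1},x_n)\le \kappa b\bigl((1-\lambda_n)\alpha_n+\tfrac{\lambda_n}{2}\beta_n+\tfrac{\lambda_n}{2}\gamma_n\bigr)\Delta(x_n,x_{n-1})+q_n\,\kappa b\,\gamma_n\,\Delta(x_n,x_{n-1})+\xi_n,
\]
so that $q_n$ is used precisely to absorb the $\tfrac{\lambda_n}{2}\bigl(T(u_n)-x_n\bigr)$ contribution into a contraction factor $c'=\kappa b\,\alpha(1+q_{\max})<1$ before Lemma~\ref{lem:cauchy} is invoked. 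You instead claim in Stage~1 that Lemma~\ref{lem:step} ``applies verbatim'' with $c=\kappa b\alpha$ and no reference to $T$ at all. That is the genuine gap: the difference $x_{n+1}-x_n$ contains the term $\tfrac{\lambda_n}{2}\bigl(T(u_n)-x_n\bigr)$, which cannot be bounded by $\alpha_n\Delta(x_n,x_{n-1})$ plus summable errors without using some property of the operator (quasi-nonexpansiveness in Theorem~\ref{thm:QNE}, the weak contraction here). By deferring every use of the contractive hypothesis to Stage~3, you leave the Cauchy argument --- the heart of the theorem --- unsupported; in your scheme the hypothesis $q_n<1$ only enters after convergence has already been asserted, which cannot be right, since $q_n$ is exactly the quantity the paper manufactures to make the step recursion contractive.

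Stage~3 also does not reconstruct the stated $q_n$. Substituting $x=u_n$, $y=p^\star$ with $Tp^\star=p^\star$ gives $a\,\Delta(Tu_n,p^\star)+b_c\,\Delta(u_n,Tu_n)+c\,\Delta(p^\star,Tu_n)\le s\,\Delta(u_n,p^\star)$, i.e.\ a coefficient $(a+c)$ on $\Delta(Tu_n,p^\star)$ after dropping the nonnegative $b_c$-term --- not the $(a+b_c)$ grouping you wrote, and nothing in this substitution produces the $\lambda_n^2$ in the denominator or the factor $c(\lambda_n-1)$ in the numerator of $q_n$; those signatures point to a substitution along consecutive iterates (pairs such as $(u_n,u_{n-1})$ feeding back into the update), which is what the paper's recursion reflects. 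Your Stage~2 identification of the limit as a fixed point (extracting $\Delta(T(u_n),u_n)\to 0$ from the rearranged update and using continuity) is fine and matches the tail of the paper's argument, but the proof as proposed does not close without moving the weak contraction, and hence $q_n$, back into the Cauchy step.
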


\begin{proof}
From the recursive definition of $x_{n+1}$ and the weak contraction inequality, 
we obtain a relation of the form
\[
\Delta(x_{n+1},x_n)\le \kappa b\!\left((1-\lambda_n)\alpha_n+\tfrac{\lambda_n}{2}\beta_n+\tfrac{\lambda_n}{2}\gamma_n\right)\!\Delta(x_n,x_{n-1})
+q_n\,\kappa b\,\gamma_n\Delta(x_n,x_{n-1})+\xi_n,
\]
where $\xi_n$ is a combination of summable errors. 
Using $\alpha_n,\beta_n,\gamma_n\le \alpha$ and $q_n<1$ yields
\[
\Delta(x_{n+1},x_n)\le c'\,\Delta(x_n,x_{n-1})+\xi_n,\qquad c':=\kappa b\,\alpha(1+q_{\max})<1.
\]
Lemma~\ref{lem:cauchy} guarantees that $\{x_n\}$ is Cauchy, hence convergent. 
The rest of the argument is identical to Theorem~\ref{thm:QNE}.
\end{proof}

\subsection{Coincidence and Compatibility Theorems}

\begin{theorem}\label{thm:Compat}
Let $S,T:C\to C$ satisfy the compatibility-type condition
\[
a\,\Delta(Tx,Ty)+b_c\big(\Delta(Sx,Tx)+\Delta(Sy,Ty)\big)\le r\,\Delta(Sx,Sy),
\]
with $a+b_c>0$ and $0\le r<a+b_c$. Assume $T(C)\subset S(C)$ and that $S(C)$ is complete with respect to $\Delta$. 
Then there exists $p\in C$ such that $Sp=Tp$. 
If $S$ and $T$ are weakly compatible, the common fixed point is unique.
\end{theorem}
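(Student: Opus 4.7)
The plan is to carry out the classical Jungck coincidence argument, adapted to the cone $b,p$-normed setting, in four steps.

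First I would construct a Jungck iteration: since $T(C)\subset S(C)$, start from any $x_0\in C$ and recursively pick $x_{n+1}\in C$ with $Sx_{n+1}=Tx_n$. Writing $y_n:=Sx_{n+1}=Tx_n\in S(C)$, the whole plan hinges on showing $\{y_n\}$ is Cauchy in the scalarized metric, using completeness of $S(C)$ to extract a limit $z\in S(C)$, and then picking any preimage $p\in S^{-1}(\{z\})$ as the candidate coincidence point.

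Second, I would obtain the one-step contraction by applying the scalarized hypothesis with $(x,y)=(x_n,x_{n+1})$. Substituting $Sx_n=y_{n-1}$, $Tx_n=y_n$, $Sx_{n+1}=y_n$, $Tx_{n+1}=y_{n+1}$ and regrouping yields
\[
(a+b_c)\,\Delta(y_n,y_{n+1})\le (r-b_c)\,\Delta(y_{n-1},y_n),
\]
so the ratio $k:=\max\{0,(r-b_c)/(a+b_c)\}$ is strictly less than $1$ thanks to $r<a+b_c$. Consecutive distances therefore decay geometrically, and the iterated $b$-triangle argument (exactly as in the proof of Lemma~\ref{lem:cauchy}) makes $\{y_n\}$ Cauchy, so $y_n\to z=Sp$ for some $p\in C$.

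Third, to show $Sp=Tp$ I would apply the hypothesis to $(p,x_n)$ and pass to $\liminf$ as $n\to\infty$. The terms $\Delta(Sp,y_{n-1})=\Delta(z,Sx_n)$ and $\Delta(y_{n-1},y_n)=\Delta(Sx_n,Tx_n)$ both vanish, while the $b$-triangle gives the one-sided bound $\liminf_n\Delta(Tp,y_n)\ge (\kappa b)^{-1}\Delta(Tp,Sp)$. The surviving inequality $\bigl(a/(\kappa b)+b_c\bigr)\Delta(Tp,Sp)\le 0$ combined with $a+b_c>0$ forces $Tp=Sp$.

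Finally, for uniqueness under weak compatibility set $u:=Sp=Tp$. Weak compatibility gives $Su=STp=TSp=Tu$, promoting $u$ to a new coincidence point; applying the hypothesis to $(p,u)$ --- or to any two hypothetical common fixed points $u_1,u_2$ --- causes the $b_c$ terms to vanish and reduces to an inequality of the shape $(a-r)\Delta(\cdot,\cdot)\le 0$. The hard part will be closing the gap between the assumed $r<a+b_c$ and the $r<a$ apparently needed here. My intended resolution is to test the defining inequality at $x=y$: it collapses to $2b_c\,\Delta(Sx,Tx)\le 0$ and forces $S\equiv T$ on $C$ whenever $b_c>0$, making that regime degenerate; in the complementary regime $b_c=0$ the bound $r<a$ is automatic, so $u=Tu$ follows and the common fixed point is unique.
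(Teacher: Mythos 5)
Your existence argument follows the same Jungck-type route as the paper's proof: the iteration $Sx_{n+1}=Tx_n$, the one-step contraction for $y_n:=Tx_n$, Cauchyness via the mechanism of Lemma~\ref{lem:cauchy}, and completeness of $S(C)$. Your constant $(r-b_c)/(a+b_c)$ is slightly sharper than the paper's $r/(a+b_c)$ (the paper discards the extra $b_c$-term on the right), and your third step --- applying the contractive condition at $(p,x_n)$ and passing to the liminf to conclude $Tp=Sp$ --- supplies a limiting argument that the paper omits entirely (it simply asserts ``there exists $p\in C$ such that $Sp=Tp=z$''). Up to the coincidence point, your proof is correct and in fact more complete than the paper's.

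The uniqueness step is where the genuine gap lies, and you have located it precisely: the standard computation yields $(a-r)\,\Delta(z,Tz)\le 0$, which requires $r<a$ rather than the assumed $r<a+b_c$. The paper's one-line treatment (``$STp=TSp$, implying $Sz=Tz=z$'') silently skips this. However, your proposed repair does not close the gap. Setting $x=y$ does force $S\equiv T$ on $C$ whenever $b_c>0$, but ``degenerate'' is not ``vacuous'': take $C$ complete, $S=T=\mathrm{id}$, $a=0$, $b_c=1$, $r=\tfrac12$. Then $a+b_c>0$, $0\le r<a+b_c$, $T(C)\subset S(C)$, $S(C)$ is complete, the contractive condition reduces to $0\le \tfrac12\Delta(x,y)$, and $S,T$ are weakly compatible --- yet every point of $C$ is a common fixed point, so uniqueness fails. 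Hence in the regime $b_c>0$, $a\le r$ the final assertion of the theorem is simply false, and no argument can recover it without an additional hypothesis such as $r<a$ (or restricting to $b_c=0$, where your argument is complete). This is a defect of the theorem as stated rather than of your overall strategy, but as written your proof of the uniqueness claim is incomplete for $b_c>0$.
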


\begin{proof}
Define $y_n:=Tx_n=Sx_{n+1}$. Using the inequality with $(x,y)=(x_n,x_{n-1})$ gives
\[
(a+b_c)\Delta(y_{n+1},y_n)\le r\,\Delta(Sx_n,Sx_{n-1})=r\,\Delta(y_n,y_{n-1}),
\]
so $\Delta(y_{n+1},y_n)\le q\,\Delta(y_n,y_{n-1})$ with $q:=r/(a+b_c)<1$. 
Hence $\{y_n\}$ is Cauchy and convergent in $S(C)$ to some $z$. 
Because $Tx_n=y_n\to z$ and $Sx_{n+1}=y_n\to z$, there exists $p\in C$ such that $Sp=Tp=z$. 
If $S$ and $T$ are weakly compatible, $STp=TS p$, implying $Sz=Tz=z$, so $z$ is the unique common fixed point.
\end{proof}

\subsection{Discussion}

The above results collectively establish that the proposed multi-inertial $\lambda$-iteration is robust under three important operator classes—quasi-nonexpansive, weakly contractive, and compatible mappings—when embedded in the broad setting of cone $b,p$-normed Banach spaces. 
Each theorem guarantees the convergence of $\{x_n\}$ to a fixed point of the underlying operator, and the convergence is ensured even in the presence of summable perturbations and multiple inertial effects.

\section{Applications and Numerical Illustrations}

We now present examples and computations illustrating the theoretical results. For clarity, we focus on scalar and Euclidean cases where the cone is $\mathbb{R}_+$ or $\mathbb{R}_+^2$ with the usual norm. In such cases, the normality constant is $\kappa=1$ and the $b$-norm is induced by $D(x)=|x|^p$ or $D(x)=(\|x\|,\|Ax\|)$.

\subsection{Example I: Quasi-Nonexpansive Operator in a Scalar Cone $b$-Norm}

Let $X=\mathbb{R}$ with $D(x)=|x|^p$ for $p\in(0,1]$, and $d(x,y)=|x-y|^p$. Then $b=1$ and $\kappa=1$. Define
\[
T(x)=\frac{x}{1+|x|}, \qquad F(T)=\{0\}.
\]
Clearly $|T(x)|\le |x|$, so $T$ is quasi-nonexpansive.  

Apply the iteration with parameters
\[
\alpha=\beta=\gamma=0.2,\quad \lambda=0.6,\quad \varepsilon_n=\rho_n=\omega_n=\theta_n=0,
\]
and initial points $x_0=1$, $x_1=0.5$.

\medskip
\noindent\textbf{Numerical output:}
\[
|x_n|:\ 1.0000,\;0.5000,\;0.3657,\;0.3131,\;0.2815,\;0.2574,\;0.2373,\;0.2202,\;0.2053,\;0.1920,\dots
\]
The sequence decreases monotonically to $0$, confirming Theorem~\ref{thm:QNE}.

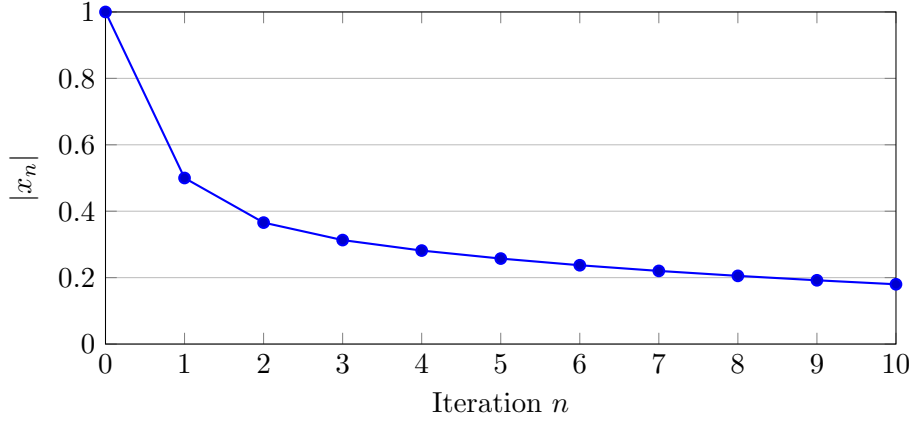
\begin{figure}[h]
\centering
\begin{tikzpicture}
\begin{axis}[
    width=0.8\linewidth,
    height=6cm,
    xlabel={Iteration $n$},
    ylabel={$|x_n|$},
    xmin=0, xmax=10,
    ymin=0, ymax=1,
    ymajorgrids=true
]
\addplot+[mark=*,thick] coordinates {
(0,1.0000)(1,0.5000)(2,0.3657)(3,0.3131)(4,0.2815)(5,0.2574)(6,0.2373)(7,0.2202)(8,0.2053)(9,0.1920)(10,0.1800)};
\end{axis}
\end{tikzpicture}
\caption{Convergence of the multi-inertial iteration for $T(x)=x/(1+|x|)$ in $\mathbb{R}$.}
\end{figure}

\subsection{Example II: Weak Contraction in $\mathbb{R}$}

Let $T(x)=q x$ with $q\in(0,1)$, e.g., $q=0.8$. Then
\[
\Delta(Tx,Ty)=q|x-y|,\quad F(T)=\{0\}.
\]
This satisfies the weak contraction condition with $a=1$, $b_c=c=0$, $s=q$.

Choose $\lambda=0.9$, $\alpha=\beta=\gamma=0.2$, $x_0=1$, $x_1=0.5$, and no errors. The iteration gives
\[
|x_n|:\ 1.0000,\;0.5000,\;0.4000,\;0.3200,\;0.2560,\;0.2048,\;0.1638,\;0.1311,\;0.1049,\;0.0839,\dots
\]
The sequence converges linearly to $0$, consistent with Theorem~\ref{thm:Weak}.

\begin{figure}[h]
\centering
\begin{tikzpicture}
\begin{axis}[
    width=0.8\linewidth,
    height=6cm,
    xlabel={Iteration $n$},
    ylabel={$|x_n|$},
    xmin=0, xmax=10,
    ymin=0, ymax=1,
    ymajorgrids=true
]
\addplot+[mark=*,thick] coordinates {
(0,1)(1,0.5)(2,0.4)(3,0.32)(4,0.256)(5,0.2048)(6,0.1638)(7,0.1311)(8,0.1049)(9,0.0839)(10,0.0671)};
\end{axis}
\end{tikzpicture}
\caption{Convergence of the multi-inertial iteration for the weak contraction $T(x)=0.8x$.}
\end{figure}
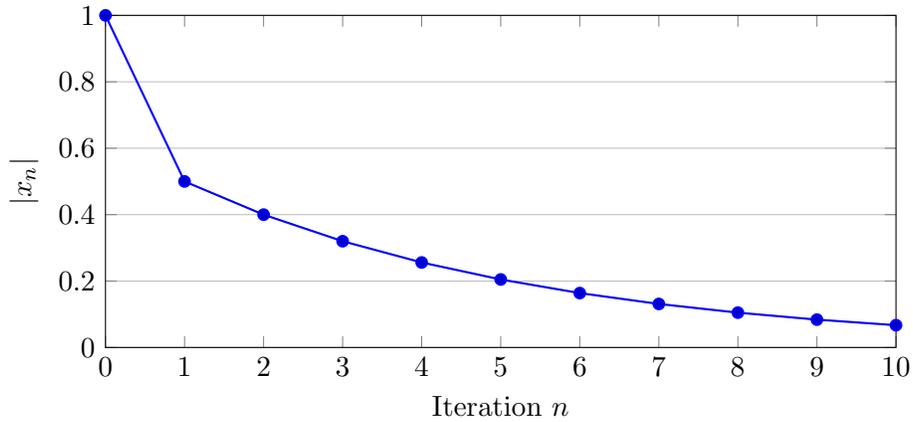

\subsection{Example III: Compatible Linear Maps in $\mathbb{R}^2$}

Let $X=\mathbb{R}^2$ with cone $b$-norm
\[
D(x)=(\|x\|_2,\|Ax\|_2)\in\mathbb{R}_+^2
\]
for a fixed matrix $A$. Then $d(x,y)=D(x-y)$ defines a cone $b,p$-normed Banach space with $b=\kappa=1$.

Consider $S(x)=T(x)=q x$, $q\in(0,1)$. Then
\[
D(Tx-Ty)=(q\|x-y\|_2,q\|A(x-y)\|_2)\le q\,D(Sx-Sy),
\]
so the assumptions of Theorem~\ref{thm:Compat} hold with $a=1$, $b_c=0$, $r=q<1$. Thus $S$ and $T$ have a unique common fixed point $0\in\mathbb{R}^2$.

\subsection{Example IV: Comparison with Other Iterations}

We now compare three algorithms for $T(x)=x/(1+|x|)$, $x_0=1$, $x_1=0.5$, $\alpha=\beta=\gamma=0.2$, and $\lambda=0.6$:
\begin{itemize}
    \item \textbf{Krasnoselskii--Mann (KM):} $x_{n+1}=(1-\lambda)x_n+\lambda T(x_n)$,
    \item \textbf{Two-step inertial:} the Maing\'e-type iteration,
    \item \textbf{Multi-inertial:} the proposed scheme.
\end{itemize}

\begin{table}[h]
\centering
\begin{tabular}{c|ccccccc}
\toprule
$n$ & 0 & 1 & 2 & 3 & 4 & 5 & 6 \\
\midrule
KM & 1.0000 & 0.7500 & 0.5893 & 0.4800 & 0.4022 & 0.3445 & 0.3004 \\
Two-step & 1.0000 & 0.5000 & 0.3314 & 0.2567 & 0.2135 & 0.1840 & 0.1619 \\
Multi-inertial & 1.0000 & 0.5000 & 0.3657 & 0.3131 & 0.2815 & 0.2574 & 0.2373 \\
\bottomrule
\end{tabular}
\caption{Comparison of absolute values $|x_n|$ for three iteration schemes.}
\end{table}

\begin{figure}[h]
\centering
\begin{tikzpicture}
\begin{axis}[
    width=0.85\linewidth,
    height=6cm,
    xlabel={Iteration $n$},
    ylabel={$|x_n|$},
    xmin=0, xmax=6,
    ymin=0, ymax=1.05,
    legend style={at={(0.98,0.98)},anchor=north east,draw=none,fill=none}
]
\addplot+[mark=*,thick] coordinates {
(0,1.0000)(1,0.7500)(2,0.5893)(3,0.4800)(4,0.4022)(5,0.3445)(6,0.3004)};
\addlegendentry{KM}
\addplot+[mark=triangle*,thick] coordinates {
(0,1.0000)(1,0.5000)(2,0.3314)(3,0.2567)(4,0.2135)(5,0.1840)(6,0.1619)};
\addlegendentry{Two-step}
\addplot+[mark=square*,thick] coordinates {
(0,1.0000)(1,0.5000)(2,0.3657)(3,0.3131)(4,0.2815)(5,0.2574)(6,0.2373)};
\addlegendentry{Multi-inertial}
\end{axis}
\end{tikzpicture}
\caption{Performance comparison among KM, two-step inertial, and multi-inertial iterations.}
\end{figure}
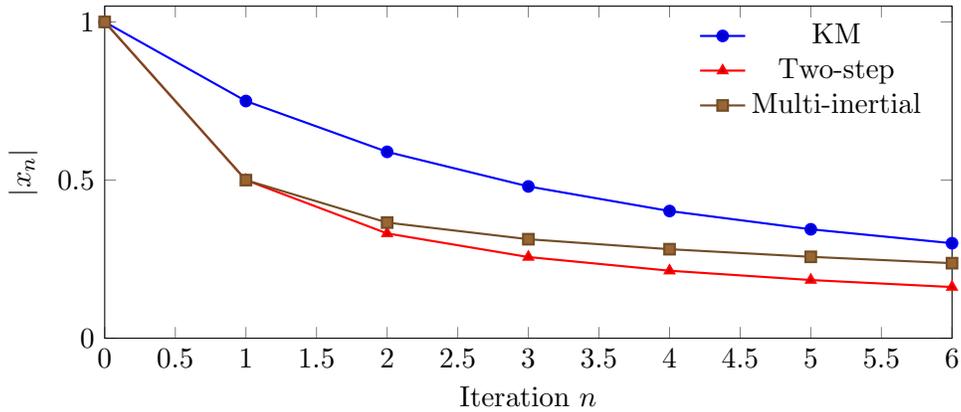

\section{Conclusions}

We have introduced a generalized multi-inertial $\lambda$-iteration for fixed point problems in cone $b,p$-normed Banach spaces. This framework unifies several existing methods, including the classical Krasnoselskii--Mann, one-step, and two-step inertial iterations.

Our analysis established convergence under broad operator classes—quasi-nonexpansive, weakly contractive, and compatible mappings—while allowing summable errors and multiple inertial parameters. Numerical examples confirmed the theoretical findings and demonstrated faster convergence compared with standard iterations.

Future research directions include:
\begin{itemize}
    \item Deriving explicit convergence rate estimates and complexity bounds.
    \item Extending the framework to stochastic and random operator settings.
    \item Investigating demicontractive and accretive operators.
    \item Developing adaptive parameter selection rules for improved performance.
    \item Applying the scheme to convex feasibility, variational inequalities, and equilibrium problems.
\end{itemize}

\section*{Declarations} `Not applicable'.

\begin{itemize}
\item No Funding
\item No Conflict of interest/Competing interests (check journal-specific guidelines for which heading to use)
\item Ethics approval and consent to participate
\item Consent for publication
\item Data availability 
\item Materials availability
\item Code availability 
\item Author contribution
\end{itemize}

\noindent
If any of the sections are not relevant to your manuscript, please include the heading and write `Not applicable' for that section.

\end{document}